\newtheorem{theorem}{Theorem}
\newtheorem{remark}{Remark}
\newtheorem{lemma}{Lemma}
\newtheorem{example}{Example}
\begin{document}
\title{\Large\textbf{Existence and uniqueness of solutions in the Lipschitz space of a functional equation and its application to the behavior of the paradise fish\footnote{This is an accepted version of the manuscript published in \textit{Applied Mathematics and Computation} \textbf{477} (2024), 128798 with DOI: \texttt{https://doi.org/10.1016/j.amc.2024.128798}}}}
\author{Josefa Caballero$^{\dagger}$, {\L}ukasz P{\l}ociniczak$^\sharp$, Kishin Sadarangani$^{\dagger}$}
\date{}
\maketitle
\small{\begin{center}{
			$^{\dagger}$Departamento de Matem\'aticas, Universidad de Las Palmas de Gran Canaria, \\Campus de Tafira Baja, $35017$ Las Palmas de Gran Canaria, Spain.}\end{center}}
\small{\begin{center}{
			$^\sharp$Faculty of Pure and Applied Mathematics, Wroclaw University of Science and Technology, Wyb. Wyspia\'nskiego 27, 50-370 Wroclaw, Poland.
			\vspace{10pt}\\\texttt{e-mail: josefa.caballero@ulpgc.es, lukasz.plociniczak@pwr.edu.pl,   kishin.sadarangani@ulpgc.es}}\end{center}}

\begin{abstract}
	In this paper, we examine the solvability of a functional equation in a Lipschitz space. As an application, we use our result to determine the existence and uniqueness of solutions to an equation describing a specific type of choice behavior model for the learning process of the paradise fish. Finally, we present some concrete examples where, using numerical techniques, we obtain approximations to the solution of the functional equation. As the straightforward Picard's iteration can be very expensive, we show that an analytical suboptimal least-squares approximation can be chosen in practice, resulting in very good accuracy. 
\end{abstract}

\noindent \textbf{Keywords}: {fixed point; Banach fixed point theorem; Lipschitz space; functional equation; behavioral sciences}.\\

\noindent \textbf{MSC}: {47H10, 54H25}.\\


\section{Introduction}
The main purpose of this paper is to study the existence and uniqueness of a certain functional equation in the Lipschitz space. The motivation of this work appears in \cite{2n}, where the authors studied the solvability of the equation
\begin{equation}\label{ec1n}
	f(x)=xf(\alpha x +1-\alpha)+(1-x)f(\beta x),
\end{equation}
for any $x\in [0,1]$, where $f:[0,1]\to \mathbb{R}$ is an unknown function satisfying $f(0)=0$ and $f(1)=1$ with $0<\alpha\leq \beta < 1$. In \cite{2n}, it is explained in detail that (\ref{ec1n}) is a suitable mathematical model for the learning process of paradise fish. In that case, the solution sought $f$ represents the probability distribution function. The functional equation \eqref{ec1n} was devised to model an experiment conducted in \cite{bush1956two}. In this work, authors investigated learning capabilities of paradise fish and their behavior in a controlled environment. In particular, the fish were subjected to trials in which they had to choose between two exits in a channel. If a fish chose one side $75\%$ of the trials, it was rewarded. The authors investigated how this reinforcement affects the learning process of the animals. Similar equations are also important in different fields in learning theory \cite{bush1956two}.

A similar equation to \eqref{ec1n} was first considered in \cite{lyubich1973functional} where Schauder's fixed point theorem was used to prove the existence of solutions. The additional assumption was the requirement that the solution can be expanded in a particular power series. This issue was further investigated in \cite{istruactescu1976functional} where the author proved the existence and uniqueness with the Banach contraction principle and Picard's iteration. Further generalization along the existence and uniqueness result was provided in \cite{berinde2015functional}.

In the present paper, we investigate the existence and uniqueness of solutions to the following functional equation
\begin{equation}\label{ec2n}
	f(x)=\varphi(x)f(\varphi_1(x))+(1-\varphi(x))f(\varphi_2(x)), \quad x\in[0,1],
\end{equation}
in the Lipschitz space, under certain conditions on $\varphi$ and $\varphi_{1,2}$. It is clear that (\ref{ec2n}) is a generalization of (\ref{ec1n}). We find sufficient conditions for the unique solution to exist and show that they reduce to the previously known result for the paradise fish equation. Moreover, by using numerical and analytical techniques, we obtain some approximations to the unique solution. We also show that using Picard's iteration suggested in \cite{2n} leads to an exponential increase in the computation time needed for each subsequent approximation, which renders the numerical scheme unfavorable in realistic situations. We are able to devise a simple analytical approximation that gives the practitioner a very accurate function to work with. 

\section{Preliminaries}
We start this section by collecting some definitions and results about the Lipschitz space. This material appears in \cite{1n}. By $H_1[0,1]$, we denote the space of functions $f:[0,1]\to \mathbb{R}$ satisfying 
\begin{equation}
	\sup \left\{\frac{|f(x)-f(y)|}{|x-y|}: \ x,y\in [0,1],\ x\neq y\right\}<\infty.
\end{equation}
It can be proved that $H_1 [0,1]$ can be normed by
\begin{equation}
	\|f\|=|f(0)|+\sup \left\{\frac{|f(x)-f(y)|}{|x-y|}: \ x,y\in [0,1],\ x\neq y\right\},
\end{equation}
after which $(H_1[0,1],\|\cdot\|)$ becomes a Banach space. It is easily seen that $H_1[0,1]\subset C[0,1]$ and, for any $f\in H_1[0,1]$, $\|f\|_{\infty}\leq \|f\|$, where $\|\cdot\|_{\infty}$ denotes the usual supremum norm
\begin{equation}
	\|f\|_{\infty}=\sup\{|f(x)|:\ x\in [0,1]\}.
\end{equation}
In our study, we consider a particular subset of $H_1[0,1]$ given by
\begin{equation}
	D^{0,1}[0,1]=\{f\in H_1[0,1]:\; f(0)=0, \; f(1)=1\}.
\end{equation}
Since the convergence in $\|\cdot\|$ in $H_1[0,1]$ is dominated by the convergence in $\|\cdot\|_{\infty}$, it is clear that $D^{0,1}[0,1]$ is a closed subset of $H_1[0,1]$. Therefore, $(D^{0,1}[0,1],d)$ is a complete metric space,  with $d$ defined by
\begin{equation}
	d(f,g)=\|f-g\|\quad \text{for any} \quad f,g\in D^{0,1}[0,1].
\end{equation}
We can list several examples of functions from $ D^{0,1}[0,1]$.
\begin{example}
	Let $\varphi:[0,1]\to \mathbb{R}$ be the function given by $\varphi(x)=x^n$ with $n\in \mathbb{N}$. It is clear that $\varphi\in D^{0,1}[0,1]$ and $\|\varphi\|\leq n$. In fact, for any $x,y\in [0,1]$, $x\neq y$, we have
	\begin{equation}
		\frac{|\varphi(x)-\varphi(y)|}{|x-y|}=\frac{|x^n-y^n|}{|x-y|}=|x^{n-1}+x^{n-2}y+\cdots+y^{n-1}|\leq n.
	\end{equation}
\end{example}
\begin{example}
	Let $\varphi:[0,1]\to \mathbb{R}$ be the function given by $\varphi (x)=\sin \left(\frac{\pi}{2}x\right)$. It is easily seen that $\varphi\in D^{0,1}[0,1]$ and $\|\varphi\|\leq \frac{\pi}{2}$.
\end{example}
\begin{example}
	Let $\varphi$ be such that $\varphi\in D^{0,1}[0,1]$ and $\rho \in D^1[0,1]=\{\varphi\in H_1[0,1]:\ \varphi(1)=1\}$ then, one can compute $\varphi\cdot \rho \in D^{0,1}[0,1]$.
\end{example}
\begin{example}
	Suppose that $\varphi,\rho \in D^{0,1}[0,1]$ then $\frac{1}{2}(\varphi +\rho)\in D^{0,1}[0,1]$.
\end{example}

\section{Existence and uniqueness}
The main result of the paper is to prove the existence and uniqueness of solutions of the following functional equation 
\begin{equation}\label{ec1}
	f(x)=\varphi(x)f(\varphi_1(x))+(1-\varphi(x))f(\varphi_2(x)),
\end{equation}
where 
\begin{equation}
	\begin{cases}
		\varphi \in D^{0,1}[0,1], & \\
		H_1[0,1] \ni \varphi_{1,2}:[0,1]\to [0,1], & \varphi_1(1)=1, \; \varphi_2(0)=0,
	\end{cases}
\end{equation}
and $f:[0,1]\to \mathbb{R}$ is the sought function. The solution to (\ref{ec1}) is searched for in the complete metric space $(D^{0,1}[0,1], d)$. To prove our main result, we need the following lemma.
\begin{lemma}\label{lema2}
	For any $\varphi \in D^{0,1}[0,1]$ and $x\in [0,1]$, we have
	\begin{equation}
		|\varphi(x)-1|\leq \|\varphi\|.
	\end{equation}
\end{lemma}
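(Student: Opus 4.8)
The plan is to exploit the boundary condition $\varphi(1)=1$ built into the definition of $D^{0,1}[0,1]$, which lets me rewrite $|\varphi(x)-1|$ as $|\varphi(x)-\varphi(1)|$ and then control it by the Lipschitz seminorm. First I would dispose of the trivial case $x=1$, where $|\varphi(1)-1|=0\leq\|\varphi\|$ since the norm is nonnegative. For $x\in[0,1)$, I would write
\begin{equation}
	|\varphi(x)-1|=|\varphi(x)-\varphi(1)|=\frac{|\varphi(x)-\varphi(1)|}{|x-1|}\,|x-1|.
\end{equation}

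Next I would bound the difference quotient by the supremum appearing in the definition of $\|\cdot\|$ (taking $y=1$ there), which gives $\frac{|\varphi(x)-\varphi(1)|}{|x-1|}\leq\|\varphi\|$ because $\|\varphi\|=|\varphi(0)|+\sup\{\ldots\}\geq\sup\{\ldots\}$. Finally, since $x\in[0,1)$ we have $|x-1|\leq 1$, so the product is at most $\|\varphi\|\cdot 1=\|\varphi\|$, which is the claim. I do not anticipate any genuine obstacle here: the only thing to be careful about is separating the case $x=1$ (so that the division by $|x-1|$ is legitimate) and remembering that the Lipschitz seminorm is dominated by the full norm $\|\varphi\|$.
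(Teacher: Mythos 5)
Your proposal is correct and follows essentially the same route as the paper: rewrite $|\varphi(x)-1|$ as $|\varphi(x)-\varphi(1)|$ using $\varphi(1)=1$, bound the difference quotient by the Lipschitz supremum (which is dominated by $\|\varphi\|$, exactly because $\varphi(0)=0$, or more simply because the norm adds a nonnegative term), and use $|x-1|\leq 1$. Your explicit separation of the case $x=1$ is a small improvement in rigor over the paper's write-up, which divides by $|x-1|$ without excluding that case.
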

\begin{proof}
	Since  $\varphi\in D^{0,1}[0,1]$, $\varphi(1)=1$, we can compute
	\begin{equation}
		\begin{split}
			|\varphi(x)-1|&=|\varphi(x)-\varphi(1)|=\frac{|\varphi(x)-\varphi(1)|}{|x-1|}\cdot |x-1|\\
			&\leq \sup \left\{ \frac{|\varphi(p)-\varphi(q)|}{|p-q|}:\ p , q \in [0,1], \ p\neq q\right\}\cdot |x-1|\\
			&\leq  \sup \left\{ \frac{|\varphi(p)-\varphi(q)|}{|p-q|}:\ p , q \in [0,1], \ p\neq q \right\}\\
			&\leq \|\varphi\|-|\varphi(0)|=\|\varphi\|,
		\end{split}
	\end{equation} 
	where we have used that $|1-x|\leq 1$ and $\varphi(0)=0$. This completes the proof.
\end{proof}

We are ready to present our result for the existence of a solution to (\ref{ec1}).
\begin{theorem}\label{teo1}
	Let $T$ be the operator defined on $D^{0,1}[0,1]$ by
	\begin{equation}
		Tf(x)=\varphi(x)f(\varphi_1(x))+(1-\varphi(x))f(\varphi_2(x)),
	\end{equation}
	for $f, \varphi\in D^{0,1}[0,1]$, $\varphi_1, \varphi_2:[0,1]\to [0,1]$, with $\varphi_1,\varphi_2 \in H_1[0,1]$ and $\varphi_1 (1)=1$ and $\varphi_2 (0)=0$. Then, we have
	\begin{itemize}
		\item[a)] $Tf\in D^{0,1}[0,1]$,
		\item[b)]$\|Tf\|\leq \left[2\|\varphi\|(\|\varphi_1\|+\|\varphi_2\|)-\|\varphi\|\varphi_1 (0)\right]\|f\|$,
		\item[c)] for any $f,g\in D^{0,1}[0,1]$, 
		\begin{equation}
			d(Tf,Tg)\leq \left[2\|\varphi\|(\|\varphi_1\|-\varphi_1 (0)+\|\varphi_2\|)\right]d(f,g).
		\end{equation}
	\end{itemize}
\end{theorem}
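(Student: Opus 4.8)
The plan is to verify the three claims in order, since each one builds on the previous. For part (a), I would first observe that $Tf$ is a finite combination of functions that are compositions of $H_1[0,1]$ functions, hence $Tf\in H_1[0,1]$ (a composition of a Lipschitz function with one mapping $[0,1]$ into $[0,1]$ is again Lipschitz, and products and sums preserve the Lipschitz property). Then I would check the boundary conditions: $Tf(0)=\varphi(0)f(\varphi_1(0))+(1-\varphi(0))f(\varphi_2(0))=0\cdot f(\varphi_1(0))+1\cdot f(0)=0$ using $\varphi(0)=0$ and $\varphi_2(0)=0$ together with $f(0)=0$; and similarly $Tf(1)=\varphi(1)f(\varphi_1(1))+(1-\varphi(1))f(\varphi_2(1))=1\cdot f(1)+0=1$ using $\varphi(1)=1$ and $\varphi_1(1)=1$ and $f(1)=1$. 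Hence $Tf\in D^{0,1}[0,1]$.

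For part (b), I would estimate the norm $\|Tf\|=|Tf(0)|+\mathrm{Lip}(Tf)$. Since $Tf(0)=0$, only the Lipschitz seminorm matters. I would bound the difference $|Tf(x)-Tf(y)|$ by splitting it, adding and subtracting cross terms, into a piece controlled by the oscillation of $\varphi$ times $\|f\|_\infty$-type bounds on $f\circ\varphi_1$ and $f\circ\varphi_2$, and pieces controlled by $\mathrm{Lip}(\varphi_1)$ and $\mathrm{Lip}(\varphi_2)$ times $\mathrm{Lip}(f)$, with weights $\|\varphi\|_\infty$ and $\|1-\varphi\|_\infty$. Here Lemma~\ref{lema2} is used to bound $\|1-\varphi\|_\infty\le\|\varphi\|$, and $\|\varphi\|_\infty\le\|\varphi\|$ comes from the general inequality noted in the Preliminaries. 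The factor $\varphi_1(0)$ appears because $\|f\circ\varphi_1\|_\infty$ can be controlled using $\|f\|$ and the value $\varphi_1(0)$ (since $f(0)=0$, one has $|f(\varphi_1(x))|\le \mathrm{Lip}(f)\,\varphi_1(x)\le\mathrm{Lip}(f)$, and a sharper accounting replaces a $\|\varphi_1\|$ by $\|\varphi_1\|-\varphi_1(0)$ in the $f\circ\varphi_1$ term while the $f\circ\varphi_2$ term keeps the full $\|\varphi_2\|$). Collecting the terms and using $\|f\|_\infty\le\|f\|$, $\|\varphi\|_\infty\le\|\varphi\|$ yields the stated bound $\bigl[2\|\varphi\|(\|\varphi_1\|+\|\varphi_2\|)-\|\varphi\|\varphi_1(0)\bigr]\|f\|$.

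For part (c), I would run essentially the same computation applied to $Tf-Tg$. Writing $Tf(x)-Tg(x)=\varphi(x)\bigl[(f-g)(\varphi_1(x))\bigr]+(1-\varphi(x))\bigl[(f-g)(\varphi_2(x))\bigr]$, I note that $h:=f-g$ satisfies $h(0)=0$ and $h(1)=0$, so $\|h\|=\mathrm{Lip}(h)=d(f,g)$ and moreover $\|h\|_\infty\le\|h\|$. The key gain compared with part (b) is that now $h(1)=0$ as well, which lets me bound $|h(\varphi_i(x))|$ by $\mathrm{Lip}(h)$ times the distance from $\varphi_i(x)$ to the nearer endpoint, and in particular the term coming from the oscillation of $\varphi$ can be handled without an extra additive constant, which is why the $-\|\varphi\|\varphi_1(0)$ in (b) becomes $-2\|\varphi\|\varphi_1(0)$ inside the bracket in (c); assembling the weighted sum of Lipschitz contributions gives $d(Tf,Tg)\le\bigl[2\|\varphi\|(\|\varphi_1\|-\varphi_1(0)+\|\varphi_2\|)\bigr]d(f,g)$.

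The main obstacle is bookkeeping in part (b) and (c): organizing the add-and-subtract decomposition so that each of the four resulting terms is bounded by the correct product of seminorms, and in particular tracking precisely where the $\varphi_1(0)$ correction enters (it comes from using $f(0)=0$, and in (c) also $f(1)=g(1)$, to trade a supremum estimate on $f\circ\varphi_1$ against a Lipschitz estimate). The composition estimate $\mathrm{Lip}(f\circ\varphi_i)\le\mathrm{Lip}(f)\,\mathrm{Lip}(\varphi_i)$ and the product estimate $\mathrm{Lip}(uv)\le\|u\|_\infty\mathrm{Lip}(v)+\|v\|_\infty\mathrm{Lip}(u)$ are the only nontrivial ingredients, and everything else is the repeated use of $\|\cdot\|_\infty\le\|\cdot\|$ and Lemma~\ref{lema2}.
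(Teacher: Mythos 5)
Your overall strategy coincides with the paper's: verify the endpoint values $Tf(0)=0$, $Tf(1)=1$; split $|Tf(x)-Tf(y)|$ by adding and subtracting the cross terms $\varphi(x)f(\varphi_1(y))$ and $(1-\varphi(x))f(\varphi_2(y))$; bound the two ``composition'' pieces by $\|\varphi\|\cdot\mathrm{Lip}(f)\cdot\mathrm{Lip}(\varphi_i)$ (with Lemma~\ref{lema2} supplying $|1-\varphi(x)|\le\|\varphi\|$) and the two ``value'' pieces by $\mathrm{Lip}(\varphi)\,|f(\varphi_i(y))|$; and in part (c) exploit that $h=f-g$ vanishes at \emph{both} endpoints so that the value pieces also earn the $-\varphi_1(0)$ discount. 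That is exactly the paper's argument, and your part (c) mechanism is the right one: $h(1)=0$ gives $|h(\varphi_1(x))|\le\mathrm{Lip}(h)\,|\varphi_1(x)-\varphi_1(1)|\le\mathrm{Lip}(h)\,(\|\varphi_1\|-\varphi_1(0))$, which is precisely why the constant improves from (b) to (c).

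There is, however, one concrete misstep in your accounting for part (b): you attribute the term $-\|\varphi\|\varphi_1(0)$ to a sharpened sup bound on $f\circ\varphi_1$, i.e.\ to an inequality of the type $\|f\circ\varphi_1\|_\infty\le\|f\|\left(\|\varphi_1\|-\varphi_1(0)\right)$. That inequality is false under the theorem's hypotheses: take $\varphi_1\equiv 1$ (admissible, since $\varphi_1(1)=1$), so $\|\varphi_1\|-\varphi_1(0)=0$ while $|f(\varphi_1(x))|=|f(1)|=1$. Using only $f(0)=0$ one cannot do better than $|f(\varphi_1(y))|\le\mathrm{Lip}(f)\,\varphi_1(y)\le\|f\|\,\|\varphi_1\|$, which is what the paper uses for the value term. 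The discount actually comes from the chain-rule term: since $\varphi_1$ maps into $[0,1]$, $\varphi_1(0)\ge 0$ and hence $\mathrm{Lip}(\varphi_1)=\|\varphi_1\|-\varphi_1(0)$, so $\mathrm{Lip}(f\circ\varphi_1)\le\|f\|\left(\|\varphi_1\|-\varphi_1(0)\right)$, while the value term keeps the full $\|\varphi_1\|$. The two allocations give the same total $2\|\varphi_1\|-\varphi_1(0)$, so your final constant is correct, but the intermediate step as you state it would fail; swapping the discount onto $\mathrm{Lip}(\varphi_1)$ repairs it and reproduces the paper's proof verbatim.
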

\begin{proof}
	\begin{itemize}
		\item[a)] First, since $\varphi(0)=f(0)=0$, $\varphi(1)=f(1)=1$, $\varphi_1(1)=1$ and $\varphi_2 (0)=0$, it follows that
		\begin{equation}
			Tf(0)=\varphi(0)f(\varphi_1(0))+(1-\varphi(0))f(\varphi_2(0))=f(0)=0,
		\end{equation}
		and
		\begin{equation}
			Tf(1)=\varphi(1)f(\varphi_1(1))+(1-\varphi(1))f(\varphi_2(1))=f(\varphi_1(1))=f(1)=1.
		\end{equation}
		To prove that $Tf\in H_1[0,1]$, we will estimate the following quotient 
		\begin{equation}
			\frac{|(Tf)(x)-(Tf)(y)|}{|x-y|},
		\end{equation} 
		for $x,y\in [0,1]$, $x\neq y$. In fact, by writing out the equation and using the triangle inequality we obtain
		\begin{equation}
			\begin{split}
				&\frac{|(Tf)(x)-(Tf)(y)|}{|x-y|}\\
				& =\frac{\left|\varphi(x)f(\varphi_1(x))+(1-\varphi(x))f(\varphi_2(x))-\varphi(y)f(\varphi_1(y))-(1-\varphi(y))f(\varphi_2(y))\right| }{|x-y|}\\ 
				&\leq \frac{|\varphi(x)f(\varphi_1(x))-\varphi(x)f(\varphi_1(y))|}{|x-y|}+ \frac{\left|(1-\varphi(x))f(\varphi_2(x))-(1-\varphi(x))f(\varphi_2(y))\right|}{|x-y|}\\	
				&+ \frac{\left|\varphi(x)f(\varphi_1 (y))-\varphi(y)f(\varphi_1(y))\right|}{|x-y|}+ \frac{\left|(1-\varphi(x))f(\varphi_2 (y))-(1-\varphi(y))f(\varphi_2(y))\right|}{|x-y|}.
			\end{split}
		\end{equation}
		Further, we can extract the quotients in each term to arrive at
		\begin{align}\label{ec10}
			\frac{|(Tf)(x)-(Tf)(y)|}{|x-y|} &\leq |\varphi(x)|\frac{|f(\varphi_1(x))-f(\varphi_1(y))|}{|\varphi_1(x)-\varphi_1(y)|}\cdot\frac{|\varphi_1(x)-\varphi_1(y)|}{|x-y|}\\
			&+ |1-\varphi(x)|\frac{\left|f(\varphi_2(x))-f(\varphi_2(y))\right|}{|\varphi_2(x)-\varphi_2(y)|}\cdot\frac{|\varphi_2(x)-\varphi_2(y)|}{|x-y|}\\
			&+ \frac{|\varphi(x)-\varphi(y)|}{|x-y|}|f(\varphi_1(y))|+\frac{|\varphi(x)-\varphi(y)|}{|x-y|}|f(\varphi_2(y))|.
		\end{align}
		As $f\in D^{0,1}[0,1]$ and $f(0)=0$ we consequently have
		\begin{align}\label{ec5}
			\frac{|f(\varphi_1(x))-f(\varphi_1(y))|}{|\varphi_1(x)-\varphi_1(y)|}&\leq \left\{ \frac{|f(x)-f(y)|}{|x-y|}:\ x,y\in [0,1],\ x\neq y\right\}\leq \|f\|.
		\end{align}
		Moreover, $\varphi\in H_{1}[0,1]$ and $\|\varphi\|_{\infty}\leq \|\varphi\|$ imply that
		\begin{equation}\label{ec2}
			|\varphi(x)|\leq \|\varphi\|.
		\end{equation}
		On the other hand, because $\varphi_1 \in H_1[0,1]$, we deduce that
		\begin{equation}\label{ec3}
			\frac{|\varphi_1(x)-\varphi_1(y)|}{|x-y|}\leq \|\varphi_1\|-\varphi_1(0).
		\end{equation}
		Furthermore, as $\varphi\in D^{0,1}[0,1]$ by Lemma \ref{lema2} we have
		\begin{equation}\label{ec4}
			|1-\varphi(x)|\leq \|\varphi\|.
		\end{equation}
		Using an argument similar to the one used to prove the inequality (\ref{ec5}), we can obtain
		\begin{equation}\label{ec6}
			\frac{|f(\varphi_2(x))-f(\varphi_2(y))|}{|\varphi_2(x)-\varphi_2(y)|}\leq \left\{ \frac{|f(x)-f(y)|}{|x-y|}:\ x,y\in [0,1],\ x\neq y\right\}\leq \|f\|.
		\end{equation}
		Furthermore, as $\varphi_2\in H_1[0,1]$ and $\varphi_2(0)=0$, we deduce that 
		\begin{equation}\label{ec7}
			\frac{|\varphi_2(x)-\varphi_2(y)|}{|x-y|}\leq \|\varphi_2\|,
		\end{equation}
		and, since $f\in D^{0,1}[0,1]$ and, particularly, $f(0)=0$, we get
		\begin{equation}\label{ec8}
			\begin{split}
				|f(\varphi_1(y))|&=\frac{|f(\varphi_1(y))|}{|\varphi_1(y)|}\cdot|\varphi_1(y)|\leq \sup \left\{\frac{|f(p)-f(q)|}{|p-q|}: \ p,q \in [0,1], \ p\neq q\right\}|\varphi_1(y)|\\
				&\leq \|f\|\|\varphi_1\|.
			\end{split}
		\end{equation}
		Similarly to the inequality (\ref{ec8}), we can infer that 
		\begin{equation}\label{ec9}
			|f(\varphi_2(y))|\leq \|f\|\|\varphi_2\|.
		\end{equation}
		Putting the inequalities (\ref{ec5})-(\ref{ec9}) in (\ref{ec10}), it follows that
		\begin{equation}
			\begin{split}
				\frac{|(Tf)(x)-(Tf)(y)|}{|x-y|}&\leq \|\varphi\|\|f\|(\|\varphi_1\|- \varphi_1(0))\\
				&+\|\varphi\|\|f\|\|\varphi_2\|+\|\varphi\|\|f\|\|\varphi_1\|+\|\varphi\|\|f\|\|\varphi_2\|\\
				&= \|f\|\left[2\|\varphi\|(\|\varphi_1\|+\|\varphi_2\|)-\|\varphi\|\varphi_1(0)\right]<\infty
			\end{split}
		\end{equation}
		This proves that $Tf\in H_1[0,1]$ and, therefore, $Tf\in D^{0,1}[0,1]$.
		
		\item[b)] Taking into account a) and the fact that $(Tf)(0)=0$, it follows that
		\begin{equation}
			\|Tf\|\leq \|f\|\left[2\|\varphi\|(\|\varphi_1\|+\|\varphi_2\|)-\|\varphi\|\varphi_1(0)\right],
		\end{equation}
		which proves b).
		
		\item[c)] Take $x,y\in [0,1]$ with $x\neq y$. Since $d(Tf,Tg)=\|Tf-Tg\|=\|T(f-g)\|$, we can write
		\begin{equation}
			\begin{split}
				&\frac{|T(f-g)(x)-T(f-g)(y)|}{|x-y|}=\frac{1}{|x-y|}\left|\varphi(x)(f-g)(\varphi_1(x))+(1-\varphi(x))(f-g)(\varphi_2(x))\right.\\
				&\left.-\varphi(y)(f-g)(\varphi_1(y))-(1-\varphi(y))(f-g)(\varphi_2(y))\right|\\
				&\leq \frac{|\varphi(x)-\varphi(y)|}{|x-y|}|(f-g)(\varphi_1(x))|+|\varphi(y)|\frac{|(f-g)(\varphi_1(x))-(f-g)(\varphi_1(y))|}{|x-y|}\\
				&+\frac{|(1-\varphi(x))-(1-\varphi(y))|}{|x-y|}|(f-g)(\varphi_2(x))|+|1-\varphi(y)|\frac{|(f-g)(\varphi_2(x))-(f-g)(\varphi_2(y))|}{|x-y|}.
			\end{split}
		\end{equation}
		Now, by taking into account that $(f-g)(\varphi_1(1))=(f-g)(\varphi_2(0))=0$, we deduce the following chain of inequalities
		\begin{equation}
			\begin{split}
				\frac{|T(f-g)(x)-T(f-g)(y)|}{|x-y|}
				&\leq \|\varphi\|\frac{|(f-g)(\varphi_1(x))-(f-g)(\varphi_1(1))|}{|\varphi_1(x)-\varphi_1(1)|}\frac{|\varphi_1(x)-\varphi_1(1)|}{1-x}(1-x)\\
				&+ \|\varphi\|\frac{|(f-g)(\varphi_1(x))-(f-g)(\varphi_1(y))|}{|\varphi_1(x)-\varphi_1(y)|}\frac{|\varphi_1(x)-\varphi_1(y)|}{|x-y|}\\
				&+	\|\varphi\|\frac{|(f-g)(\varphi_2(x))-(f-g)(\varphi_2(0))|}{|\varphi_2(x)-\varphi_2(0)|}\frac{|\varphi_2(x)-\varphi_2(0)|}{x}\cdot x\\
				&	+ \|\varphi\|\frac{|(f-g)(\varphi_2(x))-(f-g)(\varphi_2(y))|}{|\varphi_2(x)-\varphi_2(y)|}\frac{|\varphi_2(x)-\varphi_2(y)|}{|x-y|}\\
				&\leq 2\|\varphi\|\|f-g\|(\|\varphi_1\|-\varphi_1(0))+2\|\varphi\|\|f-g\|\|\varphi_2\|\\
				&\leq \left[ 2\|\varphi\|\left(\|\varphi_1\|-\varphi_1(0))+\|\varphi_2\|\right)\right]\|f-g\|.
			\end{split}
		\end{equation}
		From this we infer that 
		\begin{equation}
			d(Tf,Tg)\leq \left[ 2\|\varphi\|\left(\|\varphi_1\|-\varphi_1(0))+\|\varphi_2\|\right)\right]d(f,g).
		\end{equation}
		This completes the proof of c) and concludes the reasoning. 
	\end{itemize}
\end{proof}
As a consequence of Theorem 1 we deduce the following result.
\begin{theorem}\label{teo2}
	Under the assumptions of Theorem \ref{teo1}, if 
	\begin{equation}
		\|\varphi\|\left(\|\varphi_1\|-\varphi_1(0))+\|\varphi_2\|\right)<\frac{1}{2},
	\end{equation} 
	the operator $T$ defined in Theorem \ref{teo1} has a unique fixed point $f^{\star}$ in $D^{0,1}[0,1]$. Furthermore, if we take an arbitrary $f_0\in D^{0,1}[0,1]$ then the iteration $(f^n)$ in $D^{0,1}[0,1]$ defined by 
	\begin{equation}
		(f^n)(x)=\varphi(x)f^{n-1}(\varphi_1(x))+(1-\varphi(x))f^{n-1}(\varphi_2(x)),
	\end{equation}
	for any $n\in \mathbb{N}$, converges in the metric $d$ to the unique fixed point $f^{\star}$ of $T$.
\end{theorem}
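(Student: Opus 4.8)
The plan is to deduce Theorem \ref{teo2} directly from Theorem \ref{teo1} together with the Banach contraction principle, since all the analytic work has already been done. First I would recall the structural facts assembled earlier: $(D^{0,1}[0,1],d)$ is a complete metric space, and by part a) of Theorem \ref{teo1} the operator $T$ maps $D^{0,1}[0,1]$ into itself, so $T:D^{0,1}[0,1]\to D^{0,1}[0,1]$ is a well-defined self-map of a complete metric space.

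Next I would introduce the contraction constant
\begin{equation}
	k:=2\|\varphi\|\left(\|\varphi_1\|-\varphi_1(0)+\|\varphi_2\|\right),
\end{equation}
and observe that the hypothesis $\|\varphi\|\left(\|\varphi_1\|-\varphi_1(0)+\|\varphi_2\|\right)<\tfrac{1}{2}$ is precisely the statement $k<1$. By part c) of Theorem \ref{teo1} we have $d(Tf,Tg)\le k\,d(f,g)$ for all $f,g\in D^{0,1}[0,1]$, so $T$ is a contraction on the complete metric space $(D^{0,1}[0,1],d)$.

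Then I would invoke the Banach fixed point theorem: $T$ admits a unique fixed point $f^{\star}\in D^{0,1}[0,1]$, i.e. $Tf^{\star}=f^{\star}$, which is exactly equation \eqref{ec1}; and for every starting point $f_0\in D^{0,1}[0,1]$ the sequence of successive approximations $f^{n}:=Tf^{n-1}$ converges to $f^{\star}$ in the metric $d$ (with the usual a priori and a posteriori error bounds $d(f^n,f^\star)\le \tfrac{k^n}{1-k}\,d(f_0,f_1)$, which could be stated as a remark). The last bookkeeping step is simply to note that the iteration written in the statement, namely $f^{n}(x)=\varphi(x)f^{n-1}(\varphi_1(x))+(1-\varphi(x))f^{n-1}(\varphi_2(x))$, is by definition $Tf^{n-1}$, so the convergence assertion for the Picard iteration follows immediately.

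I do not expect any genuine obstacle here, as Theorem \ref{teo2} is a corollary; the only points that merit a line of care are verifying that the contraction constant is the one appearing in part c) of Theorem \ref{teo1} (note the minus sign on $\varphi_1(0)$, consistent with the bound \eqref{ec3}), and confirming that $T$ does not leave $D^{0,1}[0,1]$ — both already secured by Theorem \ref{teo1}. If one wished to make the application of the contraction principle fully self-contained, one could instead present the standard Cauchy-sequence argument: show $d(f^{n+1},f^{n})\le k^{n}d(f^{1},f^{0})$, deduce that $(f^{n})$ is Cauchy, use completeness of $D^{0,1}[0,1]$ to obtain a limit, pass to the limit in $f^{n}=Tf^{n-1}$ using continuity of $T$ (which is $k$-Lipschitz), and derive uniqueness from $d(f^{\star},g^{\star})=d(Tf^{\star},Tg^{\star})\le k\,d(f^{\star},g^{\star})$ with $k<1$.
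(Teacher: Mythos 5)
Your proposal is correct and follows exactly the route the paper takes: the paper's proof is a one-line appeal to Theorem \ref{teo1} (self-map of the complete space $(D^{0,1}[0,1],d)$ plus the Lipschitz estimate in part c)) together with the Banach contraction principle, and your argument simply spells out those same steps, correctly identifying the contraction constant $k=2\|\varphi\|\left(\|\varphi_1\|-\varphi_1(0)+\|\varphi_2\|\right)<1$ and recognizing the stated iteration as the Picard iteration for $T$. No gaps; your version is just a more detailed rendering of the paper's argument.
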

\begin{proof}
	Taking into account Theorem \ref{teo1} and the Banach contraction principle, we get the desired result.
\end{proof}
\begin{remark}
	A very important fact is that our main equation (\ref{ec1}), that is, 
	\begin{equation}
		f(x)=\varphi(x)f(\varphi_1(x))+(1-\varphi(x))f(\varphi_2(x)),
	\end{equation}
	has as solution $f(x)\equiv 0$, that is the trivial one. Note that the solution $f^{\star}$, given by Theorem \ref{teo2}, cannot be this function since $f^{\star}\in D^{0,1}[0,1]$ which implies that $f^{\star}(1)=1$.
\end{remark}

In the paper \cite{2n}, authors studied the existence and uniqueness of solutions of the following functional equation 
\begin{equation}\label{ec11}
	f(x)=xf(\alpha x+1-\alpha)+(1-x)f(\beta x)\quad 0<\alpha\leq \beta<1.
\end{equation}
They proved the following result.
\begin{theorem}[\cite{2n}, Theorem 5.2]\label{teo3}
	Suppose that $0<\alpha\leq \beta<1$ with $4\beta <1$. Then (\ref{ec11}) has a unique solution in $D^{0,1}[0,1]$.
\end{theorem}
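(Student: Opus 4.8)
The plan is to recognize \eqref{ec11} as the special case of the general equation \eqref{ec1} obtained by taking
\[
\varphi(x)=x,\qquad \varphi_1(x)=\alpha x+1-\alpha,\qquad \varphi_2(x)=\beta x,
\]
and then simply to verify that the hypotheses of Theorem \ref{teo2} hold for this data. First I would check the structural requirements. The map $\varphi(x)=x$ clearly belongs to $D^{0,1}[0,1]$. Both $\varphi_1$ and $\varphi_2$ are affine, hence lie in $H_1[0,1]$. Since $0<\alpha\le\beta<1$ we have $\varphi_1([0,1])=[1-\alpha,1]\subset[0,1]$ and $\varphi_2([0,1])=[0,\beta]\subset[0,1]$, and the boundary conditions $\varphi_1(1)=\alpha+1-\alpha=1$ and $\varphi_2(0)=0$ are immediate. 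Thus Theorem \ref{teo1} applies to this choice.

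Next I would compute the three norms entering the contraction constant. From the definition of $\|\cdot\|$ in $H_1[0,1]$, $\|\varphi\|=|\varphi(0)|+\sup_{x\ne y}\frac{|x-y|}{|x-y|}=0+1=1$. For $\varphi_1$ the Lipschitz slope is $\alpha$ and $\varphi_1(0)=1-\alpha$, so $\|\varphi_1\|=(1-\alpha)+\alpha=1$; in particular $\|\varphi_1\|-\varphi_1(0)=1-(1-\alpha)=\alpha$. For $\varphi_2$ we have $\varphi_2(0)=0$ and slope $\beta$, hence $\|\varphi_2\|=0+\beta=\beta$.

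Then I would check the smallness condition of Theorem \ref{teo2}. With the values above,
\[
\|\varphi\|\bigl(\|\varphi_1\|-\varphi_1(0)+\|\varphi_2\|\bigr)=1\cdot(\alpha+\beta)=\alpha+\beta.
\]
Using $\alpha\le\beta$ together with $4\beta<1$ gives $\alpha+\beta\le 2\beta<\tfrac12$, so the hypothesis $\|\varphi\|(\|\varphi_1\|-\varphi_1(0)+\|\varphi_2\|)<\tfrac12$ of Theorem \ref{teo2} is satisfied. Applying that theorem, $T$ has a unique fixed point $f^\star\in D^{0,1}[0,1]$, which is precisely the unique solution of \eqref{ec11} in $D^{0,1}[0,1]$ (and, as a bonus, the limit of the Picard iterates started from any $f_0\in D^{0,1}[0,1]$).

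There is no deep obstacle here: the whole argument is a verification that the abstract hypotheses specialize correctly. The one point deserving care — where a careless reading could go wrong — is the computation of $\|\varphi_1\|$: the affine shift contributes $|\varphi_1(0)|=1-\alpha$ to the Lipschitz norm, so $\|\varphi_1\|=1$, not $\alpha$, and it is only the combination $\|\varphi_1\|-\varphi_1(0)=\alpha$ that enters the contraction constant. It is also worth remarking that the natural condition coming out of Theorem \ref{teo2} for this data is $\alpha+\beta<\tfrac12$, which, together with $\alpha\le\beta$, is implied by (but is slightly weaker than) the assumption $4\beta<1$ used in \cite{2n}.
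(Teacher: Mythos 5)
Your verification is correct and coincides with the paper's own treatment: the paper establishes this result in the remark following the theorem by exactly the same specialization $\varphi(x)=x$, $\varphi_1(x)=\alpha x+1-\alpha$, $\varphi_2(x)=\beta x$, computing $\|\varphi\|=1$, $\|\varphi_1\|=1$, $\|\varphi_2\|=\beta$, $\varphi_1(0)=1-\alpha$, and concluding $\alpha+\beta\leq 2\beta<\tfrac12$ so that Theorem \ref{teo2} applies. Your handling of $\|\varphi_1\|$ versus $\|\varphi_1\|-\varphi_1(0)$ matches the paper's computation, and your use of $\leq$ in $\alpha+\beta\leq 2\beta$ is in fact slightly more careful than the paper's strict inequality, since it covers the case $\alpha=\beta$.
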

\begin{remark}
	Notice that (\ref{ec11}) is a particular case of (\ref{ec1}), where $\varphi(x)=x$, $\varphi_1(x)=\alpha x+1-\alpha$, and $\varphi_2(x)=\beta x$. Furthermore, in this case, it is clear that $\varphi \in D^{0,1}[0,1]$ with $\|\varphi\|=1$, $\varphi_1, \varphi_2:[0,1]\to [0,1]$ with $\varphi_1(1)=1$, $\varphi_2(0)=0$ and $\|\varphi_1\|=1$, $\|\varphi_2\|=\beta$, $\varphi_1(0)=1-\alpha$. Therefore, 
	\begin{equation}
		\|\varphi\|(\|\varphi_1\|-\varphi_1(0)+\|\varphi_2\|)=1-(1-\alpha)+\beta=\alpha+\beta<2\beta<\frac{1}{2}.
	\end{equation}
	Consequently, under the assumptions used in \cite{2n}, equation (\ref{ec11}) can be treated by Theorem \ref{teo2}. On the other hand, the authors of \cite{2n} presented a corrigendum of this paper in \cite{7n}, where they studied the existence and uniqueness of solutions to (\ref{ec11}) under the same conditions of Theorem \ref{teo3} of \cite{2n} but in space $D^0[0,1]$ (in our notation), defined by
	\begin{equation}
		D^0[0,1]=\left\{f:[0,1]\to \mathbb{R}:\ \sup_{x\neq y}\frac{|f(x)-f(y)|}{|x-y|}<\infty \ \text{and}\ f(0)=0\right\}.
	\end{equation} 
	This corrigendum was motivated by the fact that $D^{0,1}[0,1]$ is not a Banach space. However, we must stress that since the authors of \cite{7n} worked in $D^{0}[0,1]$, the obtained unique solution to (\ref{ec11}) is actually the \emph{trivial solution}. On the other hand, in the present paper, since our solution lives in $D^{0,1}[0,1]$, the unique solution $f^\star$ cannot be the trivial one, since $f^{\star}(1)=1$.
\end{remark}

\section{Practical remarks and numerical examples}
In this section, we will focus on some practical aspects of solving functional equations that we have analyzed. Since in the original setting, the unique solution represents a probability distribution function, it is important to be able to find it relatively easily and cheaply. We will see that this task is usually far from trivial. 

\subsection{Paradise fish equation}
Let us once again go back to the equation that motivated our considerations, that is, \eqref{ec1n}. We will investigate some approximate solutions to this equation. The authors of \cite{2n} proposed that the solution $f$ can be found by the following iteration
\begin{equation}
	f^{n+1}(x) = x f^n(\alpha x + 1 - \alpha) + (1-x) f^n(\beta x),
\end{equation}
with some suitably chosen initial condition. An exemplary graph of iterations is shown in Fig. \ref{fig:Plots} where we have chosen two sets of parameters: $(\alpha,\beta) = (0.1,0.5)$ and $(\alpha,\beta) = (0.3,0.7)$ with an initial condition $f^0(x) = x$. We can see that the larger the $\beta$ the graph becomes more flat near $x = 1$. This is consistent with the exact limiting solution for $\beta = 1$, that is, $f(x) = 1$ for $x\neq 0$. Moreover, in Fig. \ref{fig:Error} on the left, we can also see the mean-square error between subsequent iterations, that is, $\|f^{n+1}-f^n\|$, plotted on a semi-logarithmic scale this time with one parameter set $(\alpha,\beta) = (0.1,0.5)$ and the initial condition $f^0(x) = \sin(\pi x/2)$. We can immediately see that, as expected by the fixed-point iteration, it decreases exponentially with ratio $e^{-0.62} \approx 0.54$, that is, the error decreases twice in each iteration, which confirms the classical results concerning Picard's iteration.   

\begin{figure}
	\centering
	\includegraphics[scale = 0.9]{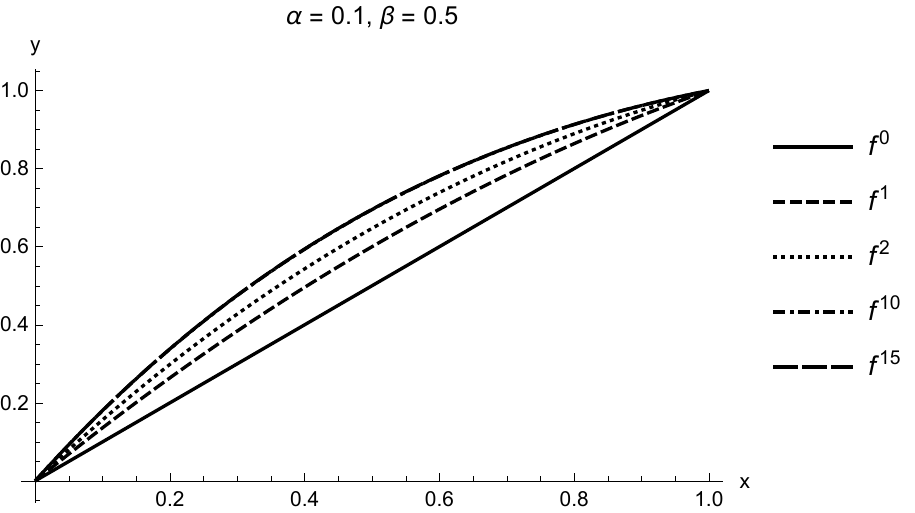}\vspace{24pt}
	\includegraphics[scale = 0.9]{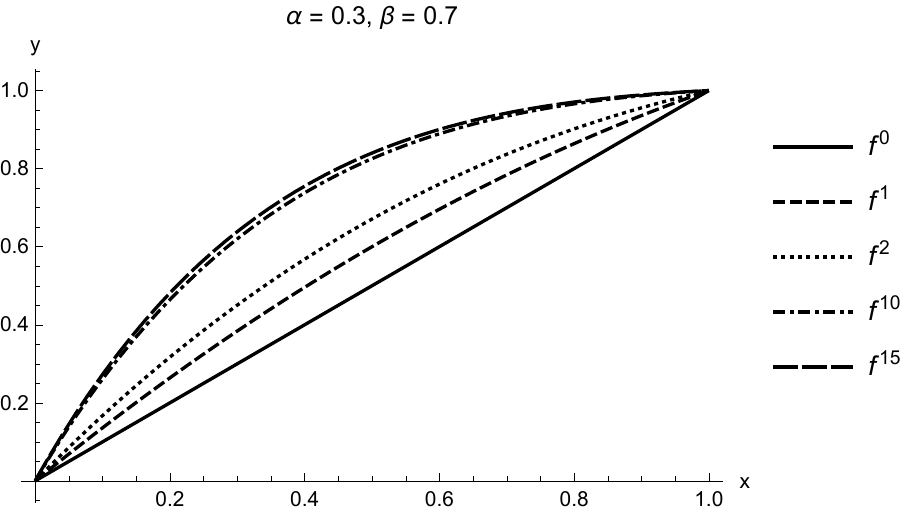}
	\caption{Exemplary plots of iterations converging to the solution of \eqref{ec1n}. Here, $(\alpha,\beta) = (0.1,0.5)$ (top), and $(\alpha,\beta) = (0.3,0.7)$ (bottom) with $f^0(x) = x$. }
	\label{fig:Plots}
\end{figure}

\begin{figure}
	\centering
	\includegraphics[scale = 1]{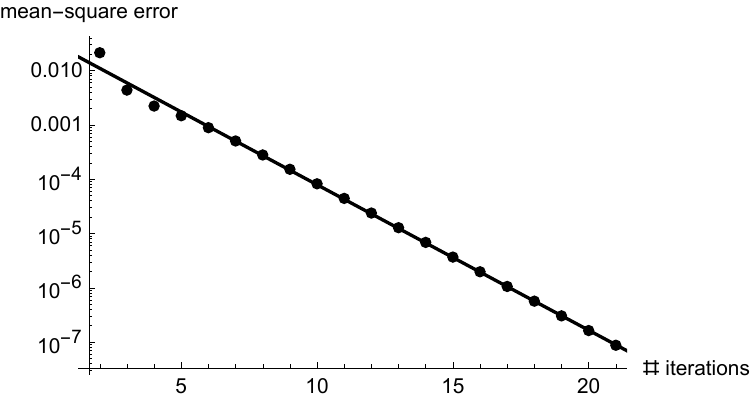}\vspace{24pt}
	\includegraphics[scale = 1]{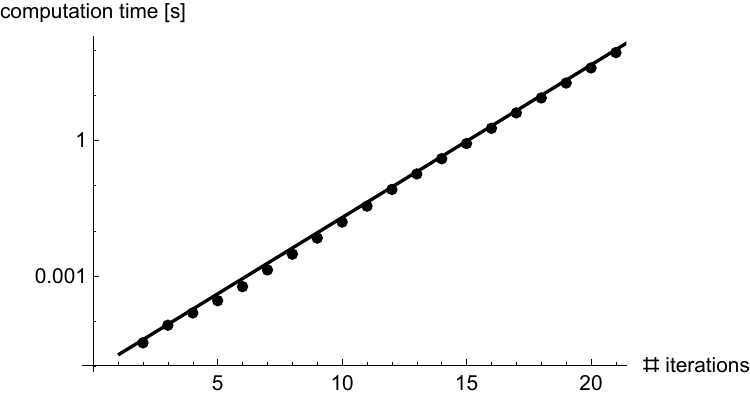}
	\caption{Top: least-squares error between subsequent iterations $n$ - a given number of iterations (dots) and the best-fit function $3.75 \times 10^{-2} \times e^{-0.62 n}$ (solid line). Bottom: time of computation required to obtain $n$ - a given number of iterations (dots) and the best-fit function $8.33 \times 10^{-6} \times 2^{1.12 n}$ (solid line). Here, $\alpha = 0.1$, $\beta = 0.5$, and $f^0(x) = \sin(\pi x /2)$. }
	\label{fig:Error}
\end{figure}

In practice, however, one would like to implement this iteration to obtain an approximate solution after several steps. As our computations in Wolfram Mathematica show, these iterations are very expensive and hence impractical for everyday use. Moreover, the symbolic output of iterates is very complex and not revealing. For example, in Fig. \ref{fig:Error} on the right, we have plotted the calculation times needed to obtain a certain number of iterations. Note the semi-logarithmic scale indicating exponential temporal complexity of the calculation. The best fit is given by $8.33 \times 10^{-6} \times 2^{1.12 n}$ which confirms the doubling of floating point operations in each iteration (by \eqref{ec1n} the right-hand side requires evaluating $f$ twice for different arguments). Due to this high cost of computation, it is, therefore, sensible to look for some simple approximations of the solution to \eqref{ec1n}. The following result gives one such function. We would like to note that it is relatively easy but tedious to obtain an optimal least-squares approximation that minimizes the residue of equation \eqref{ec1n}. However, the coefficients of the approximation are very complicated in this case, contradicting our main aim of obtaining a function of a simple form. Therefore, we present a suboptimal approximation with a neat form and decent error that can be controlled in a known way with $\alpha$ and $\beta$. 

\begin{theorem}
	For $x\in[0,1]$ define
	\begin{equation}\label{eqn:Approx}
		\widetilde{f}(x) := \frac{x(x+b)}{1+b}, \quad b = - \frac{(2-\alpha)^2+\beta^2-2}{2(\beta-\alpha)},
	\end{equation}
	and assume that $0<\alpha<\beta$ and $0<\beta\leq 2-\sqrt{2-\alpha^2}$. Then $\widetilde{f}$ is an increasing concave quadratic that satisfies $\widetilde{f}(0) = 0$ and $\widetilde{f}(1) = 1$. Moreover, its residue in the least-squares sense satisfies
	\begin{equation}\label{eqn:ApproxRes}
		\begin{split}
			&\min_{\substack{0<\alpha\leq\beta<1\\ \widetilde{f} \text{- quadratic}}}\left(\int_0^1 \left(\widetilde{f}(x) - (x \widetilde{f}(\alpha x + 1 - \alpha) + (1-x) \widetilde{f}(\beta x))\right)^2 dx \right)^\frac{1}{2} \\
			&\leq \frac{\beta^2-\alpha^2}{2\sqrt{210}} \leq \frac{(2-\sqrt{2-\alpha^2})^2-\alpha^2}{2\sqrt{210}} \leq \frac{(2-\sqrt{2})^2}{2\sqrt{210}} \approx 0.01.
		\end{split}
	\end{equation}
\end{theorem}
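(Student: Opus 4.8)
The plan is to establish, in order, the boundary values $\widetilde f(0)=0$, $\widetilde f(1)=1$, then the shape claims (increasing, concave), and finally the residue estimate \eqref{eqn:ApproxRes}, the last being the only substantive point.

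\emph{Shape of $\widetilde f$.} Writing $\widetilde f(x)=(x^{2}+bx)/(1+b)$, one has $\widetilde f(0)=0$ and $\widetilde f(1)=1$ as soon as $1+b\neq 0$, while $\widetilde f''(x)=2/(1+b)$ and $\widetilde f'(x)=(2x+b)/(1+b)$. A one-line simplification of the definition of $b$ gives
$$1+b=-\frac{(1-\alpha)^{2}+(1-\beta)^{2}}{2(\beta-\alpha)}<0,$$
using $0<\alpha<\beta$; hence $1+b\neq 0$ and $\widetilde f$ is concave. Since $\widetilde f'$ is affine, monotonicity on $[0,1]$ follows from $\widetilde f'(0)\ge 0$ and $\widetilde f'(1)\ge 0$, i.e. from $b\le -2$. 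Clearing denominators, $b\le -2$ is equivalent to $\alpha^{2}+(2-\beta)^{2}\ge 2$, hence to $2-\beta\ge\sqrt{2-\alpha^{2}}$ (legitimate since $2-\alpha^{2}>0$ for $\alpha<1$), which is precisely the hypothesis $\beta\le 2-\sqrt{2-\alpha^{2}}$. This gives all of the qualitative assertions.

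\emph{The residue.} Put $R(x):=\widetilde f(x)-\bigl(x\,\widetilde f(\alpha x+1-\alpha)+(1-x)\,\widetilde f(\beta x)\bigr)$, a polynomial of degree $\le 3$. Because the paradise-fish operator reproduces function values at $0$ and $1$ (as in the proof of Theorem \ref{teo1}(a), with $\varphi(x)=x$, $\varphi_{1}(x)=\alpha x+1-\alpha$, $\varphi_{2}(x)=\beta x$), one has $R(0)=R(1)=0$. The key point is that the prescribed $b$ is exactly the one forcing $R$ to vanish \emph{also} at the midpoint: substituting $\widetilde f(x)=ax^{2}+(1-a)x$ with $a=1/(1+b)$, the single scalar equation $\widetilde f(\tfrac12)=\tfrac12\widetilde f(1-\tfrac{\alpha}{2})+\tfrac12\widetilde f(\tfrac{\beta}{2})$ has solution $a=-2(\beta-\alpha)\big/\bigl[(1-\alpha)^{2}+(1-\beta)^{2}\bigr]$, which equals $1/(1+b)$. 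Since $a\neq 0$ and $\beta\neq\alpha$, $R$ has degree exactly $3$ with the three roots $0,\tfrac12,1$, so $R(x)=c\,x(1-x)(1-2x)$, and equating the coefficient of $x^{3}$ with the one read off directly from the definition of $R$ gives $|c|=\tfrac12|a|\,(\beta^{2}-\alpha^{2})=(\beta-\alpha)^{2}(\alpha+\beta)\big/\bigl[(1-\alpha)^{2}+(1-\beta)^{2}\bigr]$.

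\emph{Conclusion.} A routine Beta-type integral (expand $(1-2x)^{2}$, or put $x=t+\tfrac12$) yields $\int_{0}^{1}x^{2}(1-x)^{2}(1-2x)^{2}\,dx=\tfrac{1}{210}$, hence $\|R\|_{L^{2}}=|c|/\sqrt{210}$. After cancelling the positive factor $(\beta-\alpha)(\alpha+\beta)$, the bound $\|R\|_{L^{2}}\le(\beta^{2}-\alpha^{2})/(2\sqrt{210})$ is equivalent to $2(\beta-\alpha)\le(1-\alpha)^{2}+(1-\beta)^{2}$, which is once more $\alpha^{2}+(2-\beta)^{2}\ge 2$, i.e. the hypothesis. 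Since the quantity on the left of \eqref{eqn:ApproxRes} is an infimum of the $L^{2}$-residue over quadratics, it is at most $\|R\|_{L^{2}}$, which gives the first inequality. The second inequality follows on squaring $\beta\le 2-\sqrt{2-\alpha^{2}}$, and for the third one checks that $g(\alpha):=(2-\sqrt{2-\alpha^{2}})^{2}-\alpha^{2}=6-4\sqrt{2-\alpha^{2}}-2\alpha^{2}$ has $g'(\alpha)=4\alpha\bigl(1/\sqrt{2-\alpha^{2}}-1\bigr)<0$ on $(0,1)$, so $g(\alpha)\le\lim_{\alpha\to 0^{+}}g(\alpha)=(2-\sqrt2)^{2}$ and $(2-\sqrt2)^{2}/(2\sqrt{210})\approx 0.01$.

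\emph{Main obstacle.} There is no hard estimate to make: everything collapses to the two elementary facts $1+b<0$ and $\alpha^{2}+(2-\beta)^{2}\ge 2$, the latter used twice. The only step requiring foresight is recognizing that the prescribed $b$ makes $R$ vanish at $x=\tfrac12$, after which $\|R\|_{L^{2}}$ reduces to a single explicit integral; without this observation one is left with a less transparent quadratic minimization in $a$ whose minimizer has cumbersome coefficients (this is presumably the "optimal but complicated" approximation the authors allude to).
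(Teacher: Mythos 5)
Your proof is correct, and for the substantive part (the residue bound) it takes a genuinely different route from the paper. The qualitative claims are handled the same way in both: $1+b=-\bigl[(1-\alpha)^2+(1-\beta)^2\bigr]/\bigl[2(\beta-\alpha)\bigr]<0$ gives concavity, and $b\le-2\iff\alpha^2+(2-\beta)^2\ge 2\iff\beta\le 2-\sqrt{2-\alpha^2}$ gives monotonicity. For the residue, the paper treats $b$ as a free parameter: it bounds the residue by $R(x,b)=x(1-x)\,|(\beta^2-\alpha^2)x+1-(2+b)\alpha+\alpha^2+b\beta|$ using $1/(-b-1)\le 1$, sets $P(b)=\int_0^1R(x,b)^2\,dx$, locates $b_c$ by solving $P'(b)=0$ and checking $P''(b_c)>0$, verifies $b_c\le-2$, and only then evaluates $\int_0^1R(x,b_c)^2\,dx=(\beta^2-\alpha^2)^2/840$. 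You instead take the prescribed $b$ as given and identify it as the midpoint-collocation value, so that $R(0)=R(\tfrac12)=R(1)=0$ forces $R(x)=c\,x(1-x)(1-2x)$ with $|c|=(\beta-\alpha)^2(\alpha+\beta)/\bigl[(1-\alpha)^2+(1-\beta)^2\bigr]$; the exact $L^2$ norm then drops out of the single integral $\int_0^1x^2(1-x)^2(1-2x)^2\,dx=1/210$, and the hypothesis $\alpha^2+(2-\beta)^2\ge 2$ is reused once more to pass from the exact norm to $(\beta^2-\alpha^2)/(2\sqrt{210})$. The two derivations of $b$ are secretly the same fact: minimizing $\int_0^1x^2(1-x)^2(Lx+C)^2\,dx$ over the constant term $C$ forces $Lx+C$ to vanish at $x=\tfrac12$, since $\int_0^1x^3(1-x)^2\,dx=\tfrac12\int_0^1x^2(1-x)^2\,dx$. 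Your version explains where $b$ comes from, avoids the calculus in $b$, and yields the exact residue rather than only the bound $P(b_c)$; the paper's version has the advantage of exhibiting $b_c$ as the minimizer within the one-parameter family, not merely as an admissible choice. Both arguments, as does the theorem itself, only bound the minimum in \eqref{eqn:ApproxRes} from above by the residue of this particular suboptimal quadratic, which you correctly note.
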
 
\begin{proof}
	Note that by definition $\widetilde{f}$ satisfies the boundary conditions. Moreover, as a quadratic, it is convex for $b < -1$ and has a maximum greater than or equal to $1$ for $b \leq -2$. Therefore, the only admissible values of the parameter $b$ lie in the interval $(-\infty, -2]$. Furthermore, by a straightforward calculation the residue for the ansatz $\widetilde{f}(x) = x(x+b)/(1+b)$ is
	\begin{equation}
		\begin{split}
			|\widetilde{f}(x) - (x \widetilde{f}(\alpha x &+ 1 - \alpha) + (1-x) \widetilde{f}(\beta x))| \\
			&= \frac{x(1-x)}{-b-1} |(\beta^2-\alpha^2)x +1-(2+b)\alpha + \alpha^2 + b \beta| \\
			&\leq x(1-x) |(\beta^2-\alpha^2)x +1-(2+b)\alpha + \alpha^2 + b \beta| =: R(x,b)
		\end{split}
	\end{equation}
	since $b \leq -2$. Hence, by integration we obtain
	\begin{equation}
		\begin{split}
			\int_0^1 &\left(\widetilde{f}(x) - (x \widetilde{f}(\alpha x + 1 - \alpha) + (1-x) \widetilde{f}(\beta x))\right)^2 dx \\
			&\leq \int_0^1 x^2(1-x)^2 \left((\beta^2-\alpha^2)x +1-(2+b)\alpha + \alpha^2 + b \beta\right)^2 dx := P(b).
		\end{split}
	\end{equation}
	Now, we can compute the derivative of the estimate for the residue
	\begin{equation}
		\frac{d P}{db}(b) = \frac{1}{30}\frac{(\beta-\alpha)(2-2(2+b)\alpha + \alpha^2 + 2 b \beta + \beta^2)}{1+b}.
	\end{equation}
	Setting $dP/db = 0$, it is easy to see that the unique critical point $b=b_c$ is given in \eqref{eqn:Approx}. Moreover, the second derivative is
	\begin{equation}
		\frac{d^2 P}{db^2}(b_c) = \frac{(\beta-\alpha)^2}{15} > 0
	\end{equation}
	which is manifestly positive and we can conclude that $b_c$ is the unique local minimum. We also have to check whether this extremum is consistent with our requirement that $b \leq -2$ (monotonicity of the approximation). To this end, observe that since $\alpha < \beta$
	\begin{equation}
		b_c = - \frac{(2-\alpha)^2+\beta^2-2}{2(\beta-\alpha)} \leq -2 \iff \alpha^2 + (\beta-2)^2 \geq 2,
	\end{equation}
	that is the set admissible parameters $(\alpha,\beta)$ is the intersection of exterior of the circle with origin at $(0,2)$ and radius $\sqrt{2}$ with the cone $0<\alpha\leq \beta$. In particular, $\beta \leq 2-\sqrt{2-\alpha^2}$. 
	
	Finally, we can compute the estimate on the residue for the local minimum which by a simple calculation yields
	\begin{equation}
		\int_0^1 R(x, b_c)^2 dx = \frac{(\beta-\alpha)^2 (\beta+\alpha)^2}{840} \leq \frac{(\beta^2-\alpha^2)^2}{840} \leq \frac{\left((2-\sqrt{2-\alpha^2})^2-\alpha^2\right)^2}{840},
	\end{equation}
	since $\beta \leq 2-\sqrt{2-\alpha^2}$ in the worst case. This ends the proof. 
\end{proof}

As we have seen from the proof, our approximation $\widetilde{f}$ is only suboptimal, however, it has a very simple form with an explicit and revealing formula for the residue. In \emph{the worst case} the error is of the order of $10^{-2}$ which in practical applications may be sufficient. Moreover, when $\alpha \rightarrow \beta^-$ the error vanishes and the approximation $\widetilde{f}(x) \rightarrow x$ uniformly. As can be checked directly, the identity function is an exact solution to the main equation \eqref{ec1n} in this case. Therefore, the approximation is optimal in that sense. To get a quantitative image of the approximation's performance, we can observe Fig. \ref{fig:Approximation}, where we have plotted two approximations: $\widetilde{f}$ from \eqref{eqn:Approx} and the least-squares (due to the complexity of the formula we do not reproduce it here) along with $f^{15}$ as a proxy for the exact solution. First of all, both approximations are nearly identical, which has also been confirmed by other numerical computations. We can see that even for the limiting value of $\beta = 2-\sqrt{2-\alpha^2}\approx 0.617$ we obtain decent precision with an error of $0.018$ and residue $0.01$ that is consistent with our estimate. On the other hand, for $\beta = 0.5$ the error is one order of magnitude smaller, that is $4.7\times 10^{-3}$ with a residue $3\times 10^{-3}$. We can conclude that $\widetilde{f}$ provides a decent and simple almost least-squares optimal approximation to the fixed-point of \eqref{ec1n}. Of course, the search for better approximations can be carried over to some other classes of functions for which the region of parameter admissibility can be extended to the whole space $0<\alpha\leq\beta<1$. We have obtained some initial good results for the exponential family, however, the details will be the subject of future work. 

\begin{figure}
	\centering
	\includegraphics[scale = 0.9]{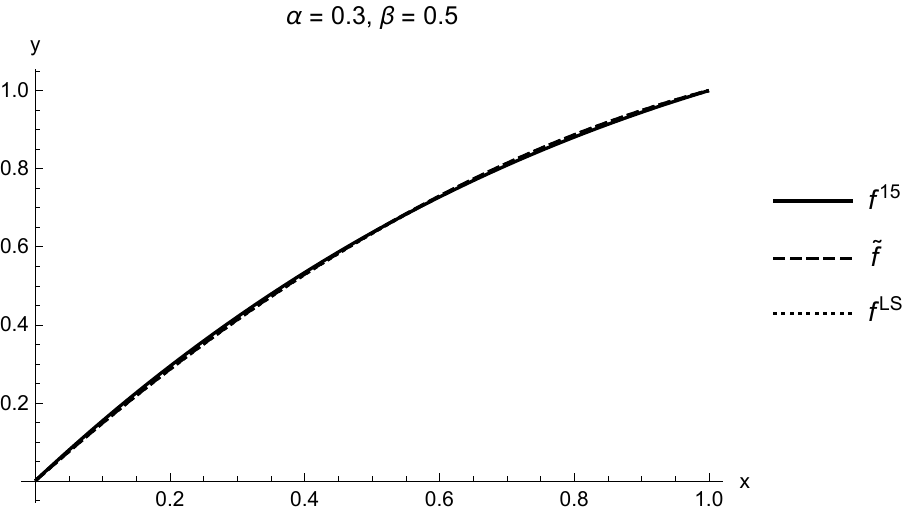}\vspace{24pt}
	\includegraphics[scale = 0.9]{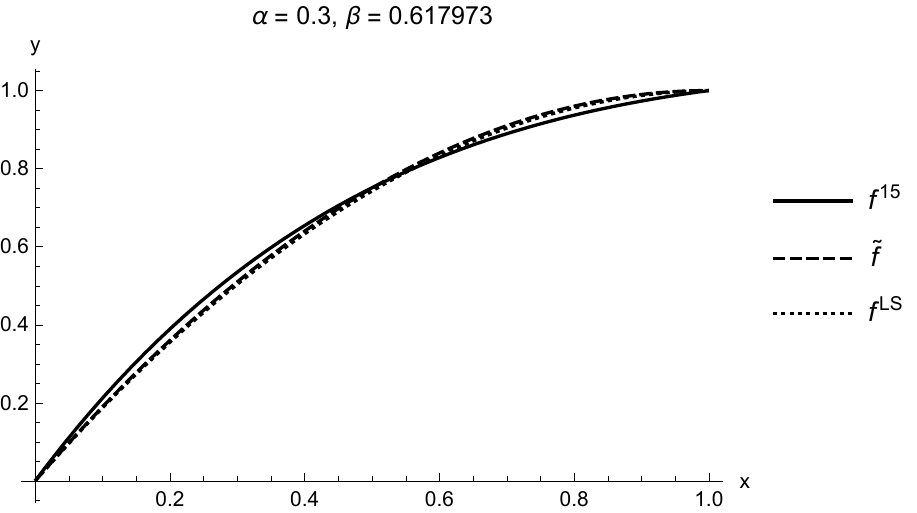}
	\caption{A proxy of the exact solution $f^{15}$ along the two approximations: $\widetilde{f}$ and $f^{LS}$ for two different sets of parameters and the initial condition $f^0(x) = x$. }
	\label{fig:Approximation}
\end{figure}

\subsection{A general equation with exact solution}
Considering a general family of admissible general equations, we have the possibility of computing the error of successive approximations. Previously, we only estimated it by computing the difference between neighboring iterations. Consider
\begin{equation}
	\varphi_m(x) = \frac{(1-\beta^m)x^m}{(\alpha x +(1-\alpha))^m - \beta^m x^m} \in D^{0,1}[0,1].
\end{equation}
It is then straightforward to check that the function 
\begin{equation}\label{eqn:ExactSol}
	f(x) = x^m
\end{equation}
with $m>0$ is a solution of the following equation in $D^{0,1}[0,1]$ 
\begin{equation}\label{eqn:ExactSolEq}
	f(x) = \varphi_m(x) f(\alpha x + 1-\alpha) + (1-\varphi_m(x)) f(\beta x).
\end{equation}
Since the solution is known and independent on $\alpha$ and $\beta$ we can easily compute the error of successive approximations. In Fig. \ref{fig:ExactSol} we can see some exemplary iterations along with their true error $\|f-f^n\|$ (previously, when the solution was not known explicitly, we were only able to find the error between successive iterations). The error behaves exponentially decreasing by a factor of $1.4$ in each iteration. It is relatively straightforward to implement some other functional equations belonging to the considered family. However, the examples presented convey the main features of their numerical solution. 

\begin{figure}
	\centering
	\includegraphics[scale = 0.9]{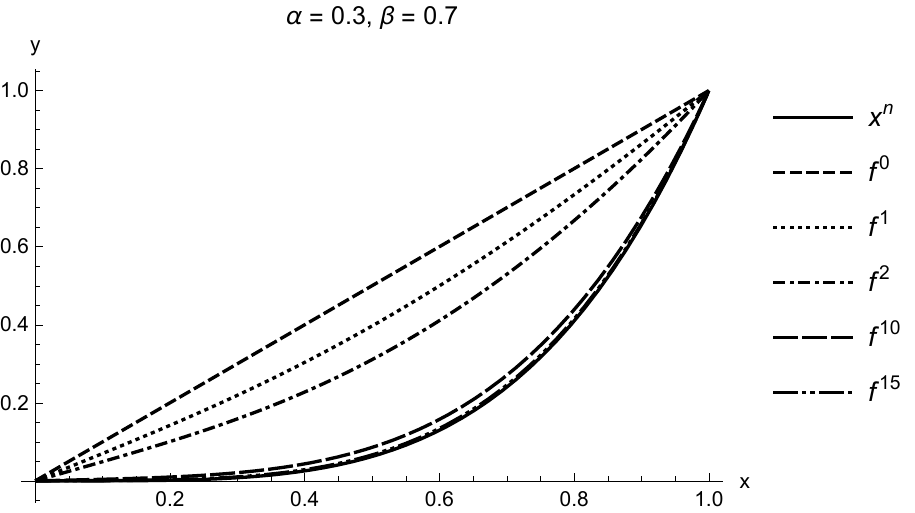}\vspace{24pt}
	\includegraphics[scale = 1.1]{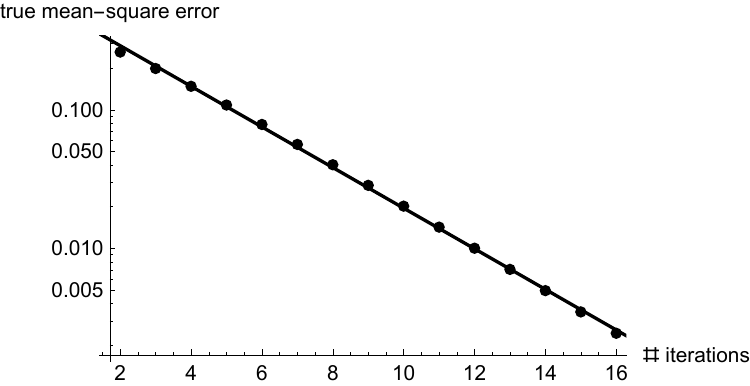}
	\caption{Exemplary plots of iterations converging to the solution of \eqref{ec1n} (top) and the corresponding error $\|f-f^n\|$ (bottom). Here, $(\alpha,\beta) = (0.3,0.7)$ and $m=4$ with $f^0(x) = x$. The fitted error curve is $0.57\times e^{-0.34 n}$.}
	\label{fig:ExactSol}
\end{figure}

\section*{Acknowledgements}
J. Caballero and K. Sadarangani are partially supported by the project PID2019-106093GB-I00. \L.P. has been supported by the National Science Centre, Poland (NCN) under the grant Sonata Bis with a number NCN 2020/38/E/ST1/00153.

\bibliography{biblio}

\begin{thebibliography}{1}

\bibitem{1n}
J{\'o}zef Bana{\'s} and Rafa{\l} Nalepa.
\newblock On the space of functions with growths tempered by a modulus of
  continuity and its applications.
\newblock {\em Journal of Function Spaces and Applications}, 2013, 2013.

\bibitem{berinde2015functional}
Vasile Berinde and Abdul~Rahim Khan.
\newblock On a functional equation arising in mathematical biology and theory
  of learning.
\newblock {\em Creative Mathematics and Informatics}, 24(1):9--16, 2015.

\bibitem{bush1956two}
Robert~R Bush and Thurlow~R Wilson.
\newblock Two-choice behavior of paradise fish.
\newblock {\em Journal of Experimental Psychology}, 51(5):315, 1956.

\bibitem{istruactescu1976functional}
Vasile~I Istr{\u{a}}{\c{t}}escu.
\newblock On a functional equation.
\newblock {\em Journal of Mathematical Analysis and Applications},
  56(1):133--136, 1976.

\bibitem{lyubich1973functional}
I~Yu Lyubich and AP~Shapiro.
\newblock On a functional equation.
\newblock {\em Teor. Funkts., Funkts. Anal. Prilozh}, 17:81--84, 1973.

\bibitem{2n}
Ali Turab and Wutiphol Sintunavarat.
\newblock On analytic model for two-choice behavior of the paradise fish based
  on the fixed point method.
\newblock {\em Journal of Fixed Point Theory and Applications}, 21:1--13, 2019.

\bibitem{7n}
Ali Turab and Wutiphol Sintunavarat.
\newblock Corrigendum: On analytic model for two-choice behavior of the
  paradise fish based on the fixed point method.
\newblock {\em Journal of Fixed Point Theory and Applications}, 22(82):1--3,
  2020.

\end{thebibliography}
\bibliographystyle{plain}

\end{document}